\newtheorem{thm}{Theorem}
\newtheorem{theorem}[thm]{Theorem}
\newtheorem{corollary}[thm]{Corollary}
\newtheorem{proposition}[thm]{Proposition}
\numberwithin{equation}{section} \theoremstyle{definition}
\newcommand{\R}{\mathbb{R}}
\newcommand{\Z}{\mathbb{Z}}
\newcommand{\Cayley}{\mathcal{G}}
\newcommand{\PSL}{{\rm PSL}}
\newcommand{\GL}{{\rm GL}}
\newcommand{\Ind}{{\rm Ind}}
\begin{document}
\title[Toroidal Fullerenes with the Cayley Graph Structures]{Toroidal Fullerenes with the Cayley Graph Structure}
\author{Ming-Hsuan Kang}
\address{Ming-Hsuan Kang\\ Department of Mathematics\\ The Pennsylvania State University\\
University Park, PA 16802} \email{\tt kang\_m@math.psu.edu}
\date{\today}
\thanks{The research is supported in part by the DARPA
grant HR0011-06-1-0012.
by the NSF grant DMS-0457574.}
\maketitle
\begin{abstract}
A central issue in molecular orbital theory is to compute the HOMO-LUMO gap of a molecule, which measures the excitability of the molecule. Thus it would be of interest to learn how to construct a molecule with the prescribed HOMO-LUMO gap. In this paper, we classify all possible structures of fullerene Cayley graphs and compute their spectrum. For any natural number $n$ not divisible by three, we show there exists an infinite family of fullerene graphs with the same HOMO-LUMO gap of size $\frac{2\pi}{\sqrt{3}n}+O(n^{-2})$. Finally, we discuss how to realize those families in three dimensional space.
\end{abstract}
\maketitle
\section{Introduction}
Since the discovery of the first fullerene, Buckministerfullerene $C_{60}$, fullerenes have attracted great interest in many scientific disciplines. Many properties of fullerenes can be studied using mathematical tools such as graph theory and group theory. Each fullerene can be represented by a 3-regular graph called the fullerene graph. Its vertices are carbon atoms of the molecule; two vertices are adjacent if there is a bond between corresponding atoms. One special feature of fullerene graphs is that they contain only pentagonal and hexagonal faces.\par
According to H\"{u}ckel molecular orbital theory, the energy spectrum of $\pi$-electrons of the fullerene can be approximated by eigenvalues of the adjacency matrix of the associated fullerene graph up to a constant multiple \cite{1}. One of the most important information of this energy spectrum is the HOMO-LUMO gap, which is the difference of energies between the highest occupied molecular orbit and the lowest unoccupied molecular orbit. Some partial results about the HOMO-LUMO gaps of certain families of graphs are known \cite{2,3,4,5}. However, it is in general difficult to construct a molecule with the prescribed HOMO-LUMO gap.
 \par
In this paper, we will consider a special kind of families of fullerene graphs: those with a Cayley graph structure. A Cayley graph $\Cayley(G,S)$ is a graph that encodes the structure of the group $G$ with a generating set $S$. It turns out that except for the case of $C_{60}$ which is a Cayley graph on $\PSL_2(\mathbb{F}_5)$ realized on the surface of a sphere, the remaining fullerene Cayley graphs are toroidal provided that they are orientable. There are different techniques to construct a family of toroidal fullerenes, some are based on combinatorial methods \cite{6,7} and some on geometric approaches \cite{8}. A great advantage of the Cayley graph structure is that all the eigenvalues can be explicitly  expressed using representations of the underlying group. \par
The plan of this paper is as follows. We first classify all possible structures of orientable fullerene Cayley graphs and compute their spectrum. For any natural number $n$ not divisible by 3, we show that there exists an infinite family of fullerene graphs with the same HOMO-LUMO gap of size $\frac{2\pi}{\sqrt{3}n}+O(n^{-2})$. Finally we discuss how to realize these families in three dimensional space.
\section{Classification of Toroidal Fullerene}
Let $G$ be a group generated by a finite set $S$. Assume that $S$ is symmetric, namely $S=S^{-1}$.
The Cayley graph $X=\Cayley(G,S)$ is defined as follows. Vertices of $X$ are elements in $G$ and two vertices $g_1, g_2 \in G$ are adjacent if $g_1= g_2 s$ for some $s\in S$.
The group $G$ acts on the graph $X$ by left multiplication. Observe that each vertex has exact $|S|$ neighbors and we call $X$ an $|S|$-regular graph.\par
$X=\Cayley(G,S)$ is called a fullerene graph if it is 3-regular
and it contains only pentagonal and hexagonal faces. In other words, a fullerene graph is a graph endowed with an extra structure: a set of faces enclosed by simple cycles of length five or six, and every edge is exactly shared by two faces. We shall assume that this extra structure is also preserved by the group action.
More precisely, if $\gamma=$ $(g,gs_1,\cdots$,$g s_1\cdots s_n\!=\!g)$ is the boundary of a face for some $s_i \in S$, then $g'\gamma$ is the boundary of another face for all elements $g' \in G$. In other word, it is enough to study the faces containing  the identity of $G$ and each of this face is represented by a relation $s_1\cdots s_n =id$.

Let $F$ be a face containing the identity with the boundary $(id,s_1,,s_1\cdots s_n\!=\!id)$. If we choose $s_1$ as the starting vertex, such that the boundary of $F$ is represented by $(s_1 , s_1s_2$,$\cdots$, $id$, $s_1)$. If we multiply $s_1^{-1}$ on the left to this boundary, we get $(id , s_2, \cdots$, $s_2\cdots s_n s_1\!=\!id)$ which represents the boundary of another face containing the identity. Thus, each cyclic permutation of $s_1$ $\cdots$ $s_n\!=\!id$ represents a different face containing the identity. When $X$ is a fullerene graph, which is 3-regular, there are three faces containing the identity and the cyclic permutations of $s_1 \cdots s_n$ can only contain up to three different elements. We conclude that $s_1 \cdots s_n$ must be equal to one of $s_1^5$, $s_1^6$, $(s_1 s_2)^3$, and $(s_1 s_2 s_3)^2$.

Write $S=\{a,b,c\}$ and regard $G$ as a quotient of a free group $F_3=\langle a,b,c\rangle$ by three kinds of relation: the orders of generators, the relations of faces and other relations. We shall first classify the group structures of $G$ without other relations and any finite fullerene graph is isomorphic to a finite quotient of $G$. We distinguish two cases:\\
Case (1): all generators have order two, say, $a^2=b^2=c^2=id$.\\
In this case, a relation of faces must have the form $(ab)^3=id$, $(abc)^2=id$, or their conjugations by permutating $a,b,c$. Note that $(ba)^3=id$ and $(ab)^3=id$ represents the same boundary of a face with opposite directions. Consequently, there are two subcases:\\
Case (1a): three relations of faces are  $(abc)^2=id, (bca)^2=id$ and $(cba)^2=id$.\\
Case (1b): three relations of faces are $(ab)^3=id, (bc)^3=id$ and $(ac)^3=id$.\\
\\
Case (2): at least one generator has order not equal to two, say, $a^2=id,b^2\neq id, c=b^{-1}$.
In this case, a relation of faces must have the form $b^5=id$, $b^6=id$, $(ab)^3=id$ or $(ab^2)^2=id$ . Therefore, there are three subcases:\\
Case (2a): three relations of faces are  $(ab^2)^2=id$, $(bab)^2=id$ and $(b^2a)^2=id$.\\
Case (2b): three relations of faces are  $b^6=id, (ab)^3=id$ and $(ba)^3=id$.\\
Case (2c): three relations of faces are  $b^5=id, (ab)^3=id$ and $(ba)^3=id$.\\
We conclude that
\begin{theorem}
A fullerene Cayley graph $\Cayley(G,S)$ is isomorphic to a finite quotient of one of the following five
graphs described in terms of generators and relations, and $S$ is the image of $S_i$ under the quotient.
\begin{itemize}
\item $G_1=\langle a,b,c| a^2=b^2=c^2=(ab)^3=(ac)^3=(bc)^3=id\rangle$;
\item $G_2=\langle a,b,c| a^2=b^2=c^2=(abc)^2=(bca)^2=(cab)^2=id\rangle $;
\item $G_3=\langle a,b| a^2=(ab^2)^2=(bab)^2=(b^2a)^2=id\rangle $;
\item $G_4=\langle a,b| a^2=b^6=(ab)^3=(ba)^3=id\rangle $;
\item $G_5=\langle a,b| a^2=b^5=(ab)^3=(ba)^3=id\rangle $.
\end{itemize}
\end{theorem}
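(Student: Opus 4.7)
The plan is to formalize the informal case analysis preceding the statement and show that every fullerene Cayley graph $\Cayley(G,S)$ has $G$ arising as a quotient of one of the five groups $G_1,\ldots,G_5$.

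First I would enumerate the possible generating sets. Since $X$ is $3$-regular, $|S|=3$; combined with $S=S^{-1}$, this leaves two structural possibilities: either all three elements of $S$ are involutions (Case 1), or exactly one is an involution and the remaining two form an inverse pair (Case 2). No other partition into orders and inverses is compatible with $|S|=3$ and $S=S^{-1}$.

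Next I would determine the possible shapes of a face word. A face containing the identity is encoded by a relation $s_1\cdots s_n=id$ with $n\in\{5,6\}$, and by the $G$-invariance of the face structure each cyclic shift of this word also bounds a face through the identity. Since $X$ is $3$-regular there are exactly three faces at the identity, so the shift-orbit of the word has size at most three. Because the size of this orbit equals the minimal period of the word, the period divides $n$ and is at most $3$, leaving precisely the four shapes $s^5$, $s^6$, $(s_1s_2)^3$, $(s_1s_2s_3)^2$ already listed in the text.

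I would then intersect the four shapes with Cases 1 and 2. In Case 1 an involution cannot produce a primitive $5$- or $6$-cycle as $s^n=id$, so only $(s_1s_2)^3$ and $(s_1s_2s_3)^2$ survive, giving subcases (1a) and (1b) corresponding to $G_2$ and $G_1$ respectively. In Case 2, after fixing $a^2=id$ and $c=b^{-1}$, I would rewrite each admissible shape using only $a$ and $b$; the three surviving subcases (2a), (2b), (2c) correspond to $G_3$, $G_4$, $G_5$. For each subcase I would read off all three face-words at the identity and verify that they match the relations in the presentation of the corresponding $G_i$.

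The main obstacle will be the bookkeeping in Case 2, where $b$ and $b^{-1}$ play asymmetric roles in the face words. One has to check carefully that once a single face word at the identity is chosen, the other two faces are forced to be two specific cyclic shifts of it and thereby yield exactly the three relations listed (for example $(ab^2)^2$, $(bab)^2$, $(b^2a)^2$ in case (2a)), rather than some other cyclic or conjugate variant. Once this is done, $G$ satisfies the generator-order relations together with the three face relations of its subcase, so $G$ is a quotient of the corresponding $G_i$, as claimed.
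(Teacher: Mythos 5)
Your proposal is essentially the paper's own proof: the same dichotomy on the orders of the generators in $S$ (three involutions versus one involution plus an inverse pair), the same argument that the cyclic shifts of a face word at the identity again bound faces at the identity and hence, there being three such faces, the word has period at most $3$, forcing the shapes $s^5$, $s^6$, $(s_1s_2)^3$, $(s_1s_2s_3)^2$, followed by the same five-subcase bookkeeping yielding $G_1,\dots,G_5$. Your refinements --- making explicit that the shift-orbit size equals the minimal period dividing $n$, and flagging the Case 2 check that the other two faces at the identity are forced cyclic shifts of the chosen word --- only spell out steps the paper performs implicitly (including its implicit convention identifying a face with its cyclic shifts and reversed traversals, as in its remark that $(ab)^3$ and $(ba)^3$ bound the same face), so the proposal matches the paper's proof.
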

It is not hard to prove that $G_5$ is isomorphic to $\rm{PSL}_2(\mathbb{F}_5)$ and in this case the graph is the well-known $C_{60}$.
For the first four cases, $X$ contains only hexagonal faces. Let $V$, $E$ and $F$ be the number of vertices, edges and faces of $X$, respectively. The euler characteristic of $X$ is equal to
$$V-E+F= |G|-\frac{3}{2}|G|+\frac{1}{2}|G|=0.$$
Therefore, $X$ lies on a torus if it is orientable; it lies on a Klein bottle if it is non-orientable. In order to embed $X$ in three dimensional space, we shall assume it is orientable. In the rest of paper, we shall consider finite toroidal fullerene Cayley graphs.
\section{Geometric Model of Toroidal Fullerene Cayley Graphs}
In this section, we will give each $G_i$ a geometric model. \par
Let $Y$ be the hexagonal tiling of the Euclidean plane $\R^2$ such that the origin $O$ is the center of a hexagon as shown in Figure 1.
\begin{center}
\includegraphics[width=0.2\textwidth]{full.1}\\
Figure 1:a hexagonal tiling of $\R^2$\\
\end{center}
For convenience, we use $\{\vec{e}_1,\vec{e}_2\}$ in Figure 1 as the basis of $\R^2$ to express linear transformations and translations. The group of affine transformations on $\R^2$ is the semi-direct product $W=\R^2 \rtimes \GL_2(\R) $. More precisely, the action of $(\vec{v},A) \in W$ on $\vec{u} \in \R^2$ is given by
$$ (\vec{v},A)(\vec{u})=\vec{v}+A\vec{u}, $$
and the group law is
$$ (\vec{v}_1,A_1)\cdot (\vec{v}_2,A_2)=(\vec{v}_1+A_1\vec{v}_2,A_1 A_2)$$
The group $W$ contains four different types of elements: rotations, reflections, translations, and glide reflections. Note that only rotations and reflections may have finite orders.
We shall construct an explicit embedding $\sigma_i$ of each group $G_i$ into $W$ and show that it induces a graph isomorphism from $\Cayley(G_i,S_i)$ to $Y$.\\
Let $\vec{e}_0=\vec{e}_1+\vec{e}_2$ and let $\rho: \rm{Aut}(Y)\rightarrow Y$ be the evaluation map given by $\rho(f)=f(\vec{e}_0)$. \\
Case (a): $G_1=\langle a,b,c| a^2=b^2=c^2=(abc)^2=id\rangle $.\\
Define $\sigma_1:G_1 \longrightarrow W$ by
\begin{eqnarray*}
&& \sigma_1(a)= \bigg(\left[\begin{matrix} 2 \\1\end{matrix}\right],\left[\begin{matrix} -1 & 0 \\ 0 & -1 \end{matrix}\right]\bigg) , \quad
\sigma_1(b)= \bigg(\left[\begin{matrix} 3 \\3\end{matrix}\right],\left[\begin{matrix} -1 & 0 \\ 0 & -1 \end{matrix}\right]\bigg)
, \\
&& \sigma_1(c)= \bigg(\left[\begin{matrix} 1 \\2\end{matrix}\right],\left[\begin{matrix} -1 & 0 \\ 0 & -1 \end{matrix}\right]\bigg).
\end{eqnarray*}
Here $\sigma_1(a), \sigma_1(b) $ and $ \sigma_1(c)$ are $180^\circ$-rotations centered at $a_0$, $b_0$, $c_0$, respectively,
as shown in Figure 2.
\\
\noindent Case (b): $G_2=\langle a,b,c| a^2=b^2=c^2=(ab)^3=(ac)^3=(bc)^3=id\rangle$.\\
Define $\sigma_2:G_2 \longrightarrow W$ by
\begin{eqnarray*}
&&\sigma_2(a)= \bigg(\left[\begin{matrix} 0 \\0\end{matrix}\right],\left[\begin{matrix} 1 & 0 \\ 1 & -1 \end{matrix}\right]\bigg) , \quad
\sigma_2(b)= \bigg(\left[\begin{matrix} 3 \\3\end{matrix}\right],\left[\begin{matrix} 0 & -1 \\ -1 & 0 \end{matrix}\right]\bigg)
, \\
&& \sigma_2(c)= \bigg(\left[\begin{matrix} 0 \\0\end{matrix}\right],\left[\begin{matrix} -1 & 1 \\ 0 & 1 \end{matrix}\right]\bigg).
\end{eqnarray*}
Here $\sigma_2(a), \sigma_2(b) $ and $ \sigma_2(c)$ are reflections with respect to the axes $a_0$, $b_0$, $c_0$, respectively  as shown in Figure 3.\\
\begin{minipage}{0.5\textwidth}
\begin{center}
\vspace{0.5cm}
\includegraphics[width=0.8\textwidth]{full.3}\\
\vspace{0.5cm}
Figure 2: $\rho \circ \sigma_1(G_1)$
\end{center}
\end{minipage}
\begin{minipage}{0.5\textwidth}
\begin{center}
\includegraphics[width=0.8\textwidth]{full.2}\\
Figure 3: $\rho \circ \sigma_2(G_2)$
\end{center}
\end{minipage}\\
\\
Case (c): $G_3= \langle a,b| a^2=(ab^2)^2=(bab)^2=(b^2a)^2=id\rangle $.\\
Define $\sigma_3:G_3 \longrightarrow W$ by
$$ \sigma_3(a)= \bigg(\left[\begin{matrix} 2 \\1\end{matrix}\right],\left[\begin{matrix} -1 & 0 \\ 0 & -1 \end{matrix}\right]\bigg) , \quad
\sigma_3(b)= \bigg(\left[\begin{matrix} 1 \\2\end{matrix}\right],\left[\begin{matrix} 1 & 0 \\ 1 & -1 \end{matrix}\right]\bigg).
$$
Here $\sigma_3(a)$ is a $180^\circ$-rotation around the point $a_0$ and $\sigma_3(b)$ is a glide reflection with respect to the axis $b_0$ as shown in Figure 4.
\\
\\
Case (d): $G_4=\langle a,b| a^2=b^6=(ab)^3=(ba)^3=id\rangle $.\\
Define $\sigma_4:G_4 \longrightarrow W$ by
$$ \sigma_4(a)= \bigg(\left[\begin{matrix} 2 \\1\end{matrix}\right],\left[\begin{matrix} -1 & 0 \\ 0 & -1 \end{matrix}\right]\bigg) , \quad
\sigma_4(b)= \bigg(\left[\begin{matrix} 2 \\1\end{matrix}\right],\left[\begin{matrix} 1 & -1 \\ 1 & 0 \end{matrix}\right]\bigg).
$$
Here $\sigma_4(a)$ is a $180^\circ$-rotation around the point $a_0$ and $\sigma_4(b)$ is a $60^\circ$-rotation around the point $b_0$ as shown in Figure 5.\\
\begin{minipage}{0.5\textwidth}
\begin{center}
\includegraphics[width=0.8\textwidth]{full.5}\\
Figure 4: $\rho \circ \sigma_3(G_3)$
\end{center}
\end{minipage}
\begin{minipage}{0.5\textwidth}
\begin{center}
\includegraphics[width=0.8\textwidth]{full.4}\\
Figure 5: $\rho \circ \sigma_4(G_4)$
\end{center}
\end{minipage}\\
\\
Observe that $\rho$ induces a graph isomorphism from $\Cayley\left(\sigma_i(G_i),\sigma_1(S_i)\right)$ to $Y$ for $i=1,2,3,4$.
To show that each $\sigma_i$ is an isomorphism , we use the following basic propositions in group theory.
\begin{proposition}\label{p1}
Let $H$ be a normal subgroup of $G$ and $\sigma$ be a homomorphism from $G$ to another group $G'$.
If $\sigma$ from $H$ to $\sigma(H)$ and  the induced map of $\sigma$ from $G/H$ to $\sigma(G)/\sigma(H)$ are both injective, then $\sigma$ is also injective.
\end{proposition}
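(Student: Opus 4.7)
The plan is to establish injectivity by showing that the kernel of $\sigma$ is trivial, using a short diagram-chase of the standard type. Concretely, I would pick an arbitrary $g \in G$ with $\sigma(g) = e'$ (the identity of $G'$) and argue in two steps that $g$ must be the identity of $G$.

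First, I would pass to the quotient. Since $H$ is normal in $G$, the subgroup $\sigma(H)$ is normal in $\sigma(G)$, so the induced map $\bar{\sigma} : G/H \to \sigma(G)/\sigma(H)$ given by $gH \mapsto \sigma(g)\sigma(H)$ is a well-defined group homomorphism. Applying this to our element $g$ yields $\bar{\sigma}(gH) = \sigma(g)\sigma(H) = \sigma(H)$, which is the identity coset of $\sigma(G)/\sigma(H)$. By the hypothesis that $\bar{\sigma}$ is injective, this forces $gH = H$, i.e., $g \in H$.

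Second, I would use the restriction. Now $g$ lies in $H$ and satisfies $\sigma(g) = e'$, so the injectivity of $\sigma|_H : H \to \sigma(H)$ immediately gives $g = e$. Thus $\ker \sigma = \{e\}$ and $\sigma$ is injective.

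There is essentially no obstacle here; this is a straightforward kernel-based version of the short five lemma. The only item that deserves a brief mention in the write-up is the well-definedness of $\bar{\sigma}$, which follows from $\sigma(H) \subseteq \sigma(H)$ together with the fact that any two representatives of $gH$ differ by an element of $H$, mapped by $\sigma$ into $\sigma(H)$.
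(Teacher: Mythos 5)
Your proof is correct and is exactly the standard kernel-chasing argument; the paper in fact states this proposition without proof (calling it a basic fact of group theory), and your two-step argument --- first pushing an element of $\ker\sigma$ into $H$ via injectivity of the induced map on quotients, then killing it via injectivity of $\sigma|_H$ --- is the canonical justification the author evidently had in mind. No gaps: the well-definedness of $\bar{\sigma}$ and the normality of $\sigma(H)$ in $\sigma(G)$ (from $\sigma(gHg^{-1})=\sigma(H)$) are handled correctly.
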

\begin{proposition}\label{p2}
Any surjective homomorphism from $\Z^2$ to $\Z^2$ is an isomorphism.
\end{proposition}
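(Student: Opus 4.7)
The plan is to exploit the free-abelian structure of $\Z^2$ and reduce the statement to a determinantal computation. Fix the standard basis $e_1=(1,0),\ e_2=(0,1)$ of $\Z^2$. Any homomorphism $\phi:\Z^2\to\Z^2$ is determined by the images $\phi(e_1),\phi(e_2)$, so $\phi$ corresponds to the integer matrix $A\in M_2(\Z)$ whose columns are these images, with $\phi(v)=Av$.

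My first step is to translate surjectivity of $\phi$ into a statement about $A$. Surjectivity means that each $e_j$ lies in the image, so there exist integer vectors $v_1,v_2\in\Z^2$ with $Av_j=e_j$. Collecting these into the matrix $B=[v_1\ v_2]\in M_2(\Z)$ gives the identity $AB=I_2$. Taking determinants, $\det(A)\det(B)=1$ in $\Z$, which forces $\det(A)=\pm1$. Then I would invoke Cramer's rule (or equivalently the integrality of the classical adjoint divided by $\det A=\pm 1$) to conclude that $A$ is invertible over $\Z$, so $\phi$ admits a two-sided inverse and is an isomorphism; in particular it is injective.

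An alternative approach, which I would mention as a cross-check, is to tensor $\phi$ with $\mathbb{Q}$. Since tensoring is right exact, the induced map $\phi_{\mathbb{Q}}:\mathbb{Q}^2\to\mathbb{Q}^2$ remains surjective, and hence is injective by equality of dimensions. Because $\Z^2\hookrightarrow\mathbb{Q}^2$ is torsion-free, $\ker\phi$ embeds into $\ker\phi_{\mathbb{Q}}=0$, forcing $\ker\phi=0$.

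The main obstacle here is essentially non-existent; the only nontrivial content is the implication ``$\det A=\pm1$ implies $A\in\GL_2(\Z)$,'' which is standard. I expect the entire argument to occupy at most a few lines, and the real role of the proposition is as a convenient tool later combined with Proposition \ref{p1} to verify that each $\sigma_i$ is injective on the translational (rank-two lattice) part of $G_i$.
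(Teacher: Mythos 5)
Your proof is correct, and both of your arguments are complete: the identity $AB=I_2$ obtained from surjectivity forces $\det(A)\det(B)=1$ in $\Z$, hence $\det A=\pm1$, and the adjugate formula $A^{-1}=\det(A)^{-1}\operatorname{adj}(A)$ then exhibits an integer two-sided inverse, so $\phi\in\GL_2(\Z)$; the cross-check via tensoring with $\mathbb{Q}$ (right-exactness preserves surjectivity, rank--nullity gives injectivity over $\mathbb{Q}$, and torsion-freeness of $\Z^2$ pulls injectivity back) is equally valid. For comparison: the paper offers no proof at all --- it states this proposition, alongside Proposition~\ref{p1}, as a ``basic proposition in group theory'' and immediately applies it to the translation subgroups $H_i\cong\Z^2$ in Theorem~2 --- so your write-up supplies a justification the author deliberately omitted, and there is no competing argument in the paper to diverge from. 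You also correctly identify the proposition's role: it certifies injectivity of $\sigma_i$ on the rank-two lattice part, which combines with Proposition~\ref{p1} and the finite-quotient check. One remark worth making: your second argument shows the statement is an instance of the general fact that a finitely generated module over a Noetherian ring (or any Hopfian group, such as a finitely generated abelian group) admits no surjective non-injective endomorphism, so the result holds for $\Z^n$ with no extra work --- though for this paper only the rank-two case is ever needed.
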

Let $\vec{v}_1=2\vec{e}_1+\vec{e}_2$, $\vec{v}_2=\vec{e}_1+2\vec{e}_2$ and $T_{\vec{v}}$ denote the translation $\vec{x} \rightarrow \vec{x}+\vec{v}$.
\begin{theorem}
$\Cayley(G_i,S_i)$ is isomorphic to $Y$ as a graph for $i=1,2,3,4.$
\end{theorem}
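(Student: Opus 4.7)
The plan is to reduce the theorem to showing that each $\sigma_i\colon G_i\to W$ is injective. Once injectivity is known, $\sigma_i$ is a group isomorphism onto $\sigma_i(G_i)$ that carries $S_i$ bijectively onto $\sigma_i(S_i)$, hence induces a graph isomorphism $\Cayley(G_i,S_i)\to\Cayley(\sigma_i(G_i),\sigma_i(S_i))$; composing with the already observed identification of the latter with $Y$ via $\rho$ finishes the proof. Verifying that each $\sigma_i$ is a well-defined homomorphism is a direct substitution of the listed matrices and vectors into the defining relations of $G_i$, yielding $(\vec{0},I)$ in every case.

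For injectivity I would apply Proposition~\ref{p1} with $H_i\leq G_i$ chosen to correspond to the translation lattice of $\sigma_i(G_i)$. Explicitly, in each case I would exhibit two words $t_1,t_2$ in the generators whose images are $T_{\vec{v}_1}$ and $T_{\vec{v}_2}$ (or convenient variants); for instance in case (a) a direct matrix computation gives $\sigma_1(cb)=T_{-\vec{v}_1}$ and $\sigma_1(ab)=T_{-\vec{v}_2}$, so one takes $t_1=cb$, $t_2=ab$. With $H_i=\langle t_1,t_2\rangle$, I would then use only the defining relations of $G_i$ to verify that $H_i$ is abelian (hence a quotient of $\Z^2$) and normal in $G_i$, by computing the conjugates of $t_1,t_2$ by each generator and showing they lie back in $H_i$.

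Injectivity of $\sigma_i$ on $H_i$ then follows from Proposition~\ref{p2}: the composite $\Z^2\twoheadrightarrow H_i\to\sigma_i(H_i)=\Z T_{\vec{v}_1}+\Z T_{\vec{v}_2}$ is a surjection $\Z^2\to\Z^2$, hence an isomorphism, forcing $H_i\to\sigma_i(H_i)$ to be an isomorphism as well. To finish applying Proposition~\ref{p1} I would produce a finite list of coset representatives $w_1,\ldots,w_k$ for $H_i$ in $G_i$ by normal-form manipulation, and confirm that their images lie in distinct cosets of $\sigma_i(H_i)$; the latter is immediate because the linear parts of the $\sigma_i(w_j)$ in $\GL_2(\R)$ are pairwise distinct, and $k$ must match the order of the point group of $\sigma_i(G_i)$, which is $2,6,4,6$ in cases (a)--(d) respectively.

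The main obstacle is the normal-form reduction: proving that every word in the generators of $G_i$ can be rewritten as $h\cdot w_j$ with $h\in H_i$ using only the defining relations. In cases (c) and (d) one can take coset representatives from $\{b^k\}\cup\{ab^k\}$, and the reduction is clean because $b$ has finite order and $aba^{-1}$ can be computed explicitly using the relation $(ab)^3=id$ (or $(ab^2)^2=id$). Cases (a) and (b) are more intricate: three involutions together with either $(abc)^2=id$ or the three braid-type relations $(ab)^3=(ac)^3=(bc)^3=id$ admit many equivalent expressions, and choosing a consistent rewriting scheme that terminates in the prescribed normal form is the principal technical point.
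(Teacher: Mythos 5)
Your proposal is correct and is essentially the paper's own argument: the paper likewise reduces the theorem to injectivity of each $\sigma_i$, takes $H_i$ to be the translation subgroup (e.g.\ $H_1=\langle bc,ba\rangle$ with $\sigma_1(H_1)=\langle T_{\vec{v}_1},T_{\vec{v}_2}\rangle$, matching your $cb,ab$ up to inverses), applies the two group-theoretic propositions exactly as you do, and verifies injectivity on the quotients $G_i/H_i$ of orders $2,6,4,6$ by checking that the nontrivial coset representatives are not translations. One small correction to an aside in your case (c): there $b$ is a glide reflection of infinite order in $G_3$, so the clean coset reduction comes from $b^2\in H_3$ (i.e.\ $b$ has order two only modulo $H_3$), not from $b$ having finite order.
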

\begin{proof}
Since $\Cayley\left(\sigma_i(G_i),\sigma_i(S_i)\right)$ is isomorphic to $Y$ as a graph,
we only need to show that $\sigma_i$ is injective and then it induces a graph isomorphism from
$\Cayley\left(G_i,S_i\right)$ to $\Cayley\left(\sigma_i(G_i),\sigma_i(S_i)\right)$.
We shall find, for each $G_i$, the translation subgroups $H_i$ and verify that $\sigma_i$ is injective on both $H_i$ and $G_i/H_i$ and hence $\sigma_i$ is injective by Proposition \ref{p1}.

Case (a) Let $H_1=\langle bc, ba \rangle$. It is easy to check that $H_1$ is an abelian normal subgroup of $G_1$.
Since $\sigma_1(H_1)=\left\langle \sigma_1(bc), \sigma_1(ba) \right\rangle=$
$\langle T_{\vec{v}_1}, T_{\vec{v}_2} \rangle \cong \Z^2$, we have that
$H_1$ is also isomorphic to $\Z^2$. By Proposition \ref{p2}, $\sigma_1$ is injective on $H_1$.
On the other hand, in $G_1/H_1$, $c=b^{-1}$, $a=b^{-1}$
and then $G_1/H_1=\langle b |b^2=id\rangle$. Since $b$ is not a translation, we have $\sigma(bH_1)\neq \sigma(H_1)$ and so $\sigma_1$ is injective on $G_1/H_1$.

Case (b) Let $H_2=\langle cbca, abac \rangle$. It is easy to check that $H_2$ is an abelian normal subgroup of $G_2$.
Since $\sigma_2(H_2)=\left\langle \sigma_2(abac), \sigma_2(cbca) \right\rangle=$
$\langle T_{2\vec{v}_1-\vec{v}_2}, T_{2\vec{v}_2-\vec{v}_1} \rangle \cong \Z^2$,
we have that $H_2$ is also isomorphic to $\Z^2$. By Proposition \ref{p2}, $\sigma_2$ is injective on $H_2$.
On the other hand, in $G_2/H_2$, $c=(aba)^{-1}$ and then $G_2/H_2 =\langle a,b| a^2=b^2=(ab)^3=id \rangle$.
Thus $G_2=\coprod gH_2$, where $g=id, a, b, ab, ba, aba$. Since none of $a, b, ab, ba, aba$ is a translation, $\sigma_2$ is injective on $G_2/H_2$.

Case (c) Let $H_3=\langle b^2, baba \rangle$. It is easy to check that $H_3$ is an abelian normal subgroup of $G_3$.
Since $\sigma_3(H_3)=\left\langle \sigma_3(b^2), \sigma_3(baba) \right\rangle=$
$\langle T_{\vec{v}_1}, T_{2\vec{v}_2-\vec{v}_1} \rangle \cong \Z^2$,  we have that
$H_3$ is also isomorphic to $\Z^2$. By Proposition \ref{p2}, $\sigma_3$ is injective on $H_3$.
On the other hand, $G_3/H_3 = \langle a,b| a^2=b^2=(ba)^2=id \rangle$.
Thus $G_3=\coprod gH_3$, where $g=id, a, b, ab$. Since none of $a, b, ab$ is a translation,  $\sigma_3$ is injective on $G_3/H_3$.

Case (d) Let $H_4=\langle b^3a, bab^2 \rangle$. It is easy to check that $H_4$ is an abelian normal subgroup of $G_4$.
Since $\sigma_4(H_4)=\left\langle \sigma_4(b^2), \sigma_4(baba) \right\rangle=$
$\langle T_{2\vec{v}_1-\vec{v}_2}, T_{2\vec{v}_2-\vec{v}_1} \rangle \cong \Z^2$,
 we have that $H_4$ is also isomorphic to $\Z^2$. By Proposition \ref{p2}, $\sigma_4$ is injective on $H_4$.
On the other hand, in $G_4/H_4$, $a=b^{-3}$ and then $G_4/H_4 = \langle b|b^6=id \rangle$.
Thus $G_4=\coprod b^{i}H_4$, where $0 \leq i \leq 5$. Since none of $b^i$ is a translation for $1\leq i \leq 5$,
, $\sigma_4$ is injective on $G_4/H_4$.
\end{proof}

\emph{For now on, we identify each $G_i$ with $\sigma_i(G_i)$ as a subgroup of $W$.}
\section{Finite Toroidal Fullerenes}
The Cayley graphs discussed in the previous section are infinite graphs
and they are all isomorphic to the hexagonal tiling $Y$ as graphs.
To obtain finite fullerene graphs, we shall consider finite quotients of these graphs.
Recall that a torus is obtained as the quotient of the Euclidean plane by a lattice.
In our case,  the translation subgroup of $W$ plays the same role as lattices.
Observe that the translation group $T$ preserving $Y$ is spanned by translations $T_{\vec{v}_1}$ and $T_{\vec{v}_2}$.
Thus every finite toroidal fullerene comes from a quotient of $\Cayley(G_i,S_i)$ by a normal subgroup $H$ of $G_i$ contained in $T$. Observe that $T \subset G_1$ and every subgroup of $T$ is normal in $G_1$.
Consequently, all finite toroidal fullerenes arise from quotients of $\Cayley(G_1,S_1)$ by its translation subgroups.
Let $ s= \left[\begin{matrix} -1 & 0 \\ 0 & -1 \end{matrix}\right]$, then we can write the generators $a,b$ and $c$ of $G_1$ as
$$ a=(\vec{v}_1,s), \quad b=(\vec{v}_1+\vec{v}_2,s),\quad c=(\vec{v}_2,s),\quad \rm{and}\quad G_1= \langle \vec{v}_1,\vec{v}_2 \rangle \rtimes \langle s \rangle  \cong \Z^2 \rtimes \Z/2\Z $$
We conclude that
\begin{theorem}
Every finite toroidal fullerene is isomorphic to some $X_N=\Cayley(G_N,\{a,b,c\})$, where
$N$ is a rank two subgroup of $\Z^2$, $G_N=\Z^2/N \rtimes \Z/2\Z$ and
$a=(\vec{v}_1,s)$, $b=(\vec{v}_1+\vec{v}_2,s)$ and $c=(\vec{v}_2,s)$.
\end{theorem}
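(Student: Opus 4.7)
My plan is to combine Theorem 1 with the explicit embeddings $\sigma_i:G_i\hookrightarrow W$ and the observation that the translation lattice $T=\langle T_{\vec v_1},T_{\vec v_2}\rangle\cong\Z^2$ sits inside $G_1$ as the $\Z^2$-factor of the semi-direct product $G_1=\Z^2\rtimes\langle s\rangle$. First, by Theorem 1, any finite fullerene Cayley graph is a finite quotient of one of the five infinite graphs $\Cay(G_i,S_i)$; since $G_5$ gives the spherical graph $C_{60}$, a finite toroidal fullerene must be a finite quotient of $\Cay(G_i,S_i)$ for some $i\in\{1,2,3,4\}$. After identifying $G_i$ with $\sigma_i(G_i)\subset W$, all four of these Cayley graphs coincide as graphs-with-faces with the hexagonal tiling $Y$ of $\R^2$.

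Second, I would argue that the kernel $H$ of any such quotient map must be a rank-two subgroup $N$ of $T$. For the quotient to be an orientable torus, $H$ has to act freely and orientation-preservingly on $\R^2$; rotations and reflections have fixed points and glide reflections reverse orientation, so every element of $H$ must be a translation. Finiteness of the quotient then forces $H$ to have rank two in $T\cong\Z^2$, i.e.\ $H=N$ for a rank-two subgroup $N\subset\Z^2$. I would then verify that $N$ is automatically normal in $G_1$: since $s=-I$ acts on $\Z^2$ by negation, $sNs^{-1}=-N=N$. Hence
\[
G_1/N=(\Z^2/N)\rtimes\langle s\rangle=G_N,
\]
and the images of $(\vec v_1,s)$, $(\vec v_1+\vec v_2,s)$, $(\vec v_2,s)$ in $G_N$ are precisely the generators $\{a,b,c\}$ of the statement, so the quotient Cayley graph is exactly $X_N$.

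Finally, since the quotient graph $Y/N$ depends only on the translation sublattice $N$ and not on which $G_i$ we started from, toroidal fullerenes arising from $G_2$, $G_3$, or $G_4$ are also isomorphic to $X_N$ for the same $N$, yielding the desired uniform identification. The main obstacle I expect is the free/orientation-preserving step, in particular ruling out glide reflections as kernel elements for $G_3$ and the order-$3$ and order-$6$ rotations for $G_4$. Those require a case-by-case examination using the explicit $\sigma_i$ rather than the immediate fixed-point argument that handles $G_1$ and $G_2$.
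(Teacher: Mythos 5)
Your proposal is correct and follows essentially the same route as the paper: reduce to quotients of the hexagonal tiling by normal subgroups consisting of translations, observe that $T\subset G_1$ and that conjugation by $s=-I$ makes every rank-two lattice $N\subset T$ normal in $G_1$, and identify the quotient as $X_N$ with $G_N=\Z^2/N\rtimes\Z/2\Z$, so that fullerenes arising from $G_2,G_3,G_4$ are captured by the same $X_N$. The only differences are that you spell out explicitly (free action plus orientation-preserving) why the kernel must consist of translations, a point the paper asserts via the torus-as-plane-modulo-lattice picture, and your closing worry about case-by-case checks for $G_3$ and $G_4$ is unnecessary, since that same fixed-point/orientation argument already rules out glide reflections and rotations of every order uniformly.
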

\section{Spectrum of A Finite Fullerene}
In this section, we compute the spectrum of the toroidal fullerene graph $X_N$. Recall that if $\Phi$ is the right regular representation of $G_N$, then the adjacent matrix of $X_N$ is equal to $\Phi(a)+\Phi(b)+\Phi(c)$. On the other hand, $\Phi$ decomposes as the sum of all irreducible representations $\{\Phi_i\}$ of $G_N$ with multiplicity equal to the dimension of $\Phi_i$. Thus, it suffices to compute the eigenvalues of $\Phi_i(a)+\Phi_i(b)+\Phi_i(c)$. Denote by $\hat{A}$  the set of characters of an abelian group $A$. To find all irreducible representations of $G_N$, we apply the following theorem \cite{9}:
\begin{theorem}
Let $G= A \rtimes H$, where $H$ is a subgroup and $A$ is a normal abelian subgroup of $G$.
For $\chi \in \hat{A}$, let
$H_\chi=\{ h \in H | \chi(h^{-1}a h)=\chi(a), \forall a\in A\}$. Let $\rho$ be an irreducible representation of $H_\chi$. Extend $\chi$ and $\rho$ to a representation of $A H_\chi$. Then $\Ind^{G}_{A H_\chi}(\chi \otimes \rho)$ is an irreducible representation of G and all irreducible representations of $G$ come from this construction.
\end{theorem}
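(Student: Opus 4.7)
The plan is to prove this by Clifford theory plus Mackey's irreducibility criterion, followed by a dimension count to show exhaustiveness.

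First, I would set up the action of $H$ on $\hat A$ by $(h\cdot\chi)(a)=\chi(h^{-1}ah)$, so that $H_\chi$ is precisely the stabilizer of $\chi$. For each $\chi$, the stabilizer condition lets us extend $\chi$ to a one-dimensional character $\tilde\chi$ of $AH_\chi$ by $\tilde\chi(ah)=\chi(a)$ (well-defined and multiplicative exactly because $h$ fixes $\chi$). Any irreducible $\rho$ of $H_\chi$ inflates to a representation of $AH_\chi$ through the projection $AH_\chi \twoheadrightarrow H_\chi$, and then $\tilde\chi\otimes\rho$ is an irreducible representation of $AH_\chi$. Write $\theta_{\chi,\rho}:=\Ind_{AH_\chi}^G(\tilde\chi\otimes\rho)$.

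The core step is to verify Mackey's criterion: $\theta_{\chi,\rho}$ is irreducible iff for every $g\in G\setminus AH_\chi$ the two representations $(\tilde\chi\otimes\rho)$ and its $g$-conjugate agree on no irreducible summand when restricted to $AH_\chi\cap g(AH_\chi)g^{-1}$. The key observation is that restricting $\tilde\chi\otimes\rho$ to $A$ gives a multiple of $\chi$, while restricting the $g$-conjugate gives a multiple of $g\cdot\chi$. Writing $g=a'h'$, one has $h'\notin H_\chi$, so $h'\cdot\chi\ne\chi$ as characters of the abelian group $A$; since $A$ is contained in the intersection $AH_\chi\cap g(AH_\chi)g^{-1}$, the two restrictions to $A$ are already disjoint, hence so are the full restrictions. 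Thus $\theta_{\chi,\rho}$ is irreducible.

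For distinctness, I would check that $\theta_{\chi,\rho}\cong\theta_{\chi',\rho'}$ forces $\chi'$ to lie in the $H$-orbit of $\chi$ (by comparing restrictions to $A$, via Frobenius reciprocity) and, once we put $\chi=\chi'$, forces $\rho\cong\rho'$ (since $\Ind_{H_\chi}^{AH_\chi}$ is fully faithful on irreducibles with the $A$-character fixed). Finally, for exhaustiveness, pick one representative $\chi$ from each $H$-orbit in $\hat A$ and sum the squared dimensions: using $\dim\theta_{\chi,\rho}=[G:AH_\chi]\dim\rho=|G|\dim\rho/(|A||H_\chi|)$ and $\sum_{\rho\in\widehat{H_\chi}}(\dim\rho)^2=|H_\chi|$, a representative $\chi$ contributes $|G|^2/(|A|^2|H_\chi|)$. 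Summing over orbit representatives and using $|H|/|H_\chi|$ for orbit sizes gives $\sum_\chi 1/|H_\chi|=|A|/|H|$, so the grand total equals $|G|^2/(|A||H|)=|G|$. This matches $\sum_{\pi\in\hat G}(\dim\pi)^2$, so no irreducibles are missed.

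The main obstacle is the careful bookkeeping inside the Mackey-criterion step: one must verify that the semidirect product structure makes the double-coset representatives behave well, and that conjugation by an arbitrary $g=a'h'\in G$ transforms the extended representation $\tilde\chi\otimes\rho$ into a tensor product of the same form with $\chi$ replaced by $h'\cdot\chi$. Everything else (irreducibility of $\tilde\chi\otimes\rho$, Frobenius reciprocity, and the orbit-counting identity for the dimension sum) is routine once that conjugation computation is in place.
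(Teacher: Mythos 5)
The paper itself contains no proof of this statement: it is quoted directly from Serre \cite{9} (Proposition 25 of Section 8.2, the little-group method of Wigner and Mackey), so there is no internal argument to compare against. Your proof is correct, and its first two thirds coincide with Serre's: the extension $\tilde\chi(ah)=\chi(a)$ is well defined and multiplicative precisely because $H_\chi$ stabilizes $\chi$, the inflation of $\rho$ makes $\tilde\chi\otimes\rho$ irreducible on $AH_\chi=A\rtimes H_\chi$, and Mackey's criterion applies exactly as you say: for $g=a'h'\notin AH_\chi$ one has $h'\notin H_\chi$, the conjugate representation restricts to $A$ as a multiple of $h'\cdot\chi\neq\chi$, and since $A\subseteq AH_\chi\cap g(AH_\chi)g^{-1}$ (by normality of $A$) disjointness on $A$ already forces disjointness on the whole intersection. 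Where you genuinely diverge is exhaustiveness. Serre restricts an arbitrary irreducible $\pi$ of $G$ to $A$, picks a character $\chi$ occurring there, observes that the $\chi$-isotypic component is stable under $AH_\chi$ and contains some $\tilde\chi\otimes\rho$, and concludes $\pi\cong\Ind^{G}_{AH_\chi}(\tilde\chi\otimes\rho)$ by Frobenius reciprocity together with the already-proved irreducibility of the induced representation; you instead sum squared dimensions over orbit representatives. Your count is arithmetically right: $\sum_{\rho}(\dim\rho)^2=|H_\chi|$ gives each orbit a contribution of $|G|^2/(|A|^2|H_\chi|)$, and $\sum_{\mathrm{orbits}}|H|/|H_\chi|=|\hat{A}|=|A|$ yields the total $|G|^2/(|A||H|)=|G|$. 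But be aware that this route makes the pairwise-inequivalence step load-bearing rather than optional, and that step is the one place you are sketchy: to recover $\rho$ from $\theta_{\chi,\rho}$ you should say explicitly that the $\chi$-isotypic subspace of the restriction to $A$ is $AH_\chi$-stable and isomorphic to $\tilde\chi\otimes\rho$, because in the Mackey restriction formula only the trivial double coset contributes to that isotypic piece (as $h\cdot\chi=\chi$ forces $h\in H_\chi$). Serre's argument obtains exhaustiveness without first proving distinctness, which is why his write-up is shorter; your version buys a self-contained completeness check by counting, at the cost of having to nail down distinctness carefully, and it uses finiteness of $G$ essentially (harmless here, since the theorem is applied only to the finite groups $G_N$).
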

In our case, $G=G_N$, $H= \mathbb{Z}/2\mathbb{Z}=\langle s\rangle $,
$A= \mathbb Z^2/N$ and $H_\chi= \{id\}$ or $\langle s\rangle $. If $H_{\chi}=\{id\}$, then it has only the trivial representation. In this case, $\Ind^{G}_{A H_\chi}(\chi \otimes \rho)={\rm Ind}^G_A(\chi)$ is two dimensional such that
\begin{eqnarray*}
{\rm Ind}^G_A(\chi)(a)&=&\left(\begin{matrix} 0 & \overline{\chi(\vec{v}_1)} \\ \chi(\vec{v}_1)& 0 \end{matrix}\right),
{\rm Ind}^G_A(\chi)(b)=\left(\begin{matrix} 0 & \overline{\chi(\vec{v}_1)\chi(\vec{v}_2)} \\ \chi(\vec{v}_1)\chi(\vec{v}_2) & 0 \end{matrix}\right)\\
{\rm Ind}^G_A(\chi)(c)&=&\left(\begin{matrix} 0 & \overline{\chi(\vec{v}_2)} \\ \chi(\vec{v}_2) & 0 \end{matrix}\right).
\end{eqnarray*}
So \\
$
{\rm Ind}^G_A(\chi)(a)+ {\rm Ind}^G_A(\chi)(b)+ {\rm Ind}^G_A(\chi)(c)= \\
\left(\begin{matrix} 0 & \overline{\chi(\vec{v}_1)}+\overline{\chi(\vec{v}_1)\chi(\vec{v}_2)}+\overline{\chi(\vec{v}_2)} \\ \chi(\vec{v}_1)+\chi(\vec{v}_1)\chi(\vec{v}_2)+\chi(\vec{v}_2) & 0 \end{matrix}\right),$\\

which has eigenvalues $\pm |  \chi(\vec{v}_1)+ \chi(\vec{v}_2)+\chi(\vec{v}_1)\chi(\vec{v}_2)|$. \\
If $H_{\chi}=\langle s\rangle $ , then $\Ind^{G}_{A H_\chi}(\chi \otimes \rho)=\chi \rho$. Since $\rho \in \hat{H}$ takes values in $\{1, -1\}$ and eigenvalues of the adjacency matrix are all real, we have
\begin{eqnarray*}
\rho \chi(a)+\rho \chi(b)+\rho \chi(c)&=&\rho(s)\bigg(\chi(\vec{v}_1)+\chi(\vec{v}_1+\vec{v}_2)+\chi(\vec{v}_2)\bigg)\\
&=&|\chi(\vec{v}_1)+\chi(\vec{v}_1+\vec{v}_2)+\chi(\vec{v}_2)| \mbox{ or } -|\chi(\vec{v}_1)+\chi(\vec{v}_1+\vec{v}_2)+\chi(\vec{v}_2)|.
\end{eqnarray*}
We conclude that
\begin{theorem}
The spectrum of $X_N$ is
$\bigg\{\pm |\chi(\vec{v}_1)+ \chi(\vec{v}_2)+\chi(\vec{v}_1)\chi(\vec{v}_2)|\,\bigg| \chi \in \hat{A} \bigg\}.$
\end{theorem}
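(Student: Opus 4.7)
The plan is to organize the work already laid out in this section. I would begin by recalling that the adjacency matrix of $X_N=\Cayley(G_N,\{a,b,c\})$ equals $\Phi(a)+\Phi(b)+\Phi(c)$, where $\Phi$ is the right regular representation, and that $\Phi$ decomposes (up to equivalence) as $\bigoplus_i (\dim\Phi_i)\Phi_i$ indexed by the irreducibles $\Phi_i$ of $G_N$. Consequently the spectrum of the adjacency matrix, regarded as a set, is the union over $i$ of the spectra of $\Phi_i(a)+\Phi_i(b)+\Phi_i(c)$, and it suffices to compute these for each irreducible.

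The next step is to invoke the Mackey-type Theorem stated just above to classify the irreducibles of $G_N=A\rtimes\langle s\rangle$ with $A=\Z^2/N$. Since $s$ acts on $A$ by inversion, a direct check gives $H_\chi=\langle s\rangle$ precisely when $\chi=\chi^{-1}$ (i.e. $\chi$ is real-valued), and $H_\chi=\{id\}$ otherwise. This partitions $\hat A$ into two classes: the real characters produce one-dimensional irreducibles $\chi\otimes\rho$ with $\rho(s)\in\{\pm1\}$, while each non-real character yields a two-dimensional induced representation $\Ind^{G_N}_A\chi$, with $\Ind^{G_N}_A\chi\cong\Ind^{G_N}_A\chi^{-1}$ pairing non-real characters.

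I would then read off the eigenvalues from the explicit matrices already displayed in the text. For the two-dimensional induced case, the matrix is antidiagonal with complex-conjugate entries $\chi(\vec v_1)+\chi(\vec v_1)\chi(\vec v_2)+\chi(\vec v_2)$ and its conjugate (using $\chi(\vec v_1+\vec v_2)=\chi(\vec v_1)\chi(\vec v_2)$), whose eigenvalues are therefore $\pm|\chi(\vec v_1)+\chi(\vec v_2)+\chi(\vec v_1)\chi(\vec v_2)|$. For the one-dimensional case the eigenvalue is $\rho(s)\bigl(\chi(\vec v_1)+\chi(\vec v_1+\vec v_2)+\chi(\vec v_2)\bigr)$, which is real (since $\chi$ is real) and therefore equals $\pm$ the same absolute value as $\rho(s)$ ranges over $\{\pm1\}$. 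Thus both cases contribute values of the asserted form.

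Finally I would take the union over $\chi\in\hat A$. Because the expression $|\chi(\vec v_1)+\chi(\vec v_2)+\chi(\vec v_1)\chi(\vec v_2)|$ is invariant under $\chi\mapsto\chi^{-1}$, allowing $\chi$ to range over all of $\hat A$ (rather than over one representative of each pair of non-real characters) produces no new values and merely duplicates contributions already accounted for. The only point requiring care — and the main bookkeeping obstacle — is to verify that the two cases $H_\chi=\{id\}$ and $H_\chi=\langle s\rangle$ jointly exhaust $\hat A$, so that no eigenvalue of the adjacency matrix is missed; once this is observed, the theorem follows at once.
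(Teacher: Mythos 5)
Your proposal is correct and follows essentially the same route as the paper: decompose the right regular representation, apply the quoted Mackey-type theorem for $G_N=A\rtimes\langle s\rangle$, and read off the eigenvalues $\pm\,|\chi(\vec{v}_1)+\chi(\vec{v}_2)+\chi(\vec{v}_1)\chi(\vec{v}_2)|$ from the two-dimensional induced representations and the one-dimensional representations $\chi\rho$ respectively. Your extra bookkeeping --- identifying $H_\chi=\langle s\rangle$ precisely with the real-valued characters (since $s$ acts on $A$ by inversion) and noting invariance under $\chi\mapsto\chi^{-1}$ --- merely makes explicit what the paper leaves implicit.
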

Recall that if the cardinality of the graph is equal to $M$, the HOMO eigenvalue $\lambda_H$ is the $(\frac{M}{2}+1)$-th largest eigenvalue, the LUMO eigenvalue $\lambda_L$ is the $(\frac{M}{2})$-th largest eigenvalue and $\lambda_H-\lambda_L$ is called the HOMO-LUMO gap\footnote{In terms of eigenvalues of the laplacian, $3-\lambda_H$ corresponds to HOMO and $3-\lambda_L$ corresponds to LUMO.}.
For bipartite graphs, $\lambda_H$ is equal to the smallest nonnegative eigenvalue; $\lambda_L$ is equal to the largest non-positive eigenvalue and $\lambda_L=-\lambda_H$. Note that $G_N=A \coprod As$ gives a bipartite structure of $X_N$ and so we have
\begin{corollary} \label{c1}
$\lambda_H-\lambda_L=2 \min_{\chi \in \hat{A}}|\chi(\vec{v}_1)+ \chi(\vec{v}_2)+\chi(\vec{v}_1)\chi(\vec{v}_2)|.$
\end{corollary}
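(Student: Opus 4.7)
The plan is to combine the preceding spectrum theorem with the bipartite structure of $X_N$ that the paper has already highlighted; the argument then reduces to three essentially automatic steps.

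First, I would verify explicitly that the decomposition $G_N = A \coprod As$ is a graph bipartition of $X_N$. The generators $a=(\vec{v}_1,s)$, $b=(\vec{v}_1+\vec{v}_2,s)$, $c=(\vec{v}_2,s)$ all share the rotation component $s$, so left multiplication by any generator swaps $A$ and $As$. Consequently every edge of $X_N$ crosses the partition, and $X_N$ is bipartite with color classes of equal size $|A|=M/2$.

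Second, I would read off the nonnegative part of the spectrum from the previous theorem: the eigenvalues of the adjacency matrix form the set $\{\pm|\chi(\vec{v}_1)+\chi(\vec{v}_2)+\chi(\vec{v}_1)\chi(\vec{v}_2)|:\chi\in\hat A\}$, and in particular the smallest nonnegative eigenvalue is
\[
\lambda_H \;=\; \min_{\chi\in\hat A}\bigl|\chi(\vec{v}_1)+\chi(\vec{v}_2)+\chi(\vec{v}_1)\chi(\vec{v}_2)\bigr|,
\]
the minimum being attained since $\hat A$ is finite.

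Third, invoking the general fact recalled just above the corollary — that for a bipartite graph $\lambda_L=-\lambda_H$ — gives $\lambda_H-\lambda_L=2\lambda_H$, which is exactly the identity claimed. There is no real obstacle in this proof; it is a bookkeeping step built on top of the preceding theorem. The one point I would be careful about is the bipartite assertion itself, since it is the bridge between the purely algebraic spectrum description and the HOMO/LUMO interpretation; that is why I would spell out the $A$ versus $As$ argument rather than merely citing it.
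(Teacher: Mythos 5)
Your proposal is correct and follows essentially the same route as the paper, which likewise combines the spectrum theorem with the bipartition $G_N = A \coprod As$ and the fact that $\lambda_L = -\lambda_H$ for bipartite graphs; the paper simply asserts these points in one line, while you spell out the verification (note only that edges in the paper's convention arise from \emph{right} multiplication $g \mapsto gs'$, though since each generator has nontrivial image in $G_N/A \cong \Z/2\Z$ the coset-swapping argument works identically). No gaps beyond that cosmetic detail.
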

A character $\chi$ of $\Z^2$ is uniquely determined by $\chi(\vec{v}_1)= \exp(i\theta_1)$ and
$\chi(\vec{v}_2)= \exp(i\theta_2)$, where $\theta_1, \theta_2 \in (\R/2\pi\Z)^2$. We identify $\chi \in \hat{\Z}^2 $ with $(\theta_1,\theta_2) \in (\R/2\pi\Z)^2.$ The characters of $\Z^2/N$ are the characters of $\Z^2$ trivial on $N$, so it can be identified with
$$N^{\perp}=\left\{(\theta_1,\theta_2) \in (\R/2\pi\Z)^2 \bigg|
\exp\left(i (\theta_1 x+ \theta_2 y)\right)=1, \mbox{for all }(x,y)\in N\right\}.$$
Consider a function $f$ defined by
\begin{eqnarray*}
f(\theta_1,\theta_2)&=&| \exp(i\theta_1)+ \exp(i\theta_2)+\exp(i\theta_1+i\theta_2)|^2 \\
&=&3+2\cos \theta_1+2\cos \theta_2+2\cos \theta_1 \cos \theta_2+2\sin \theta_1\sin \theta_2.
\end{eqnarray*}
We can reformulate Corollary \ref{c1} as
\begin{corollary} \label{c2}
$$ \lambda_H-\lambda_L = \min_{(\theta_1,\theta_2)\in N^\perp} 2\sqrt{f(\theta_1,\theta_2)}. $$
\end{corollary}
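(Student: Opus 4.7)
The plan is to reduce Corollary \ref{c2} to a direct computation from Corollary \ref{c1}, since the new statement is really just a reformulation expressing $|\chi(\vec{v}_1)+\chi(\vec{v}_2)+\chi(\vec{v}_1)\chi(\vec{v}_2)|$ in the coordinates $(\theta_1,\theta_2)$ of $N^\perp$.

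First I would use the identification of $\hat{A}$ with $N^\perp$ already set up in the paragraph before the corollary: a character $\chi\in\hat{A}$ corresponds to $(\theta_1,\theta_2)\in N^\perp$ via $\chi(\vec{v}_1)=\exp(i\theta_1)$ and $\chi(\vec{v}_2)=\exp(i\theta_2)$. This gives $\chi(\vec{v}_1)\chi(\vec{v}_2)=\exp(i(\theta_1+\theta_2))$, so
\[
|\chi(\vec{v}_1)+\chi(\vec{v}_2)+\chi(\vec{v}_1)\chi(\vec{v}_2)|^2
=|e^{i\theta_1}+e^{i\theta_2}+e^{i(\theta_1+\theta_2)}|^2,
\]
which is exactly $f(\theta_1,\theta_2)$ by definition.

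Next I would verify that the expansion $|e^{i\theta_1}+e^{i\theta_2}+e^{i(\theta_1+\theta_2)}|^2$ yields the trigonometric form displayed in the definition of $f$: writing $|z|^2=z\bar z$ and collecting the six off-diagonal terms into three cosine pairs gives $3+2\cos\theta_1+2\cos\theta_2+2\cos(\theta_1-\theta_2)$, and the identity $\cos(\theta_1-\theta_2)=\cos\theta_1\cos\theta_2+\sin\theta_1\sin\theta_2$ converts this into the expression stated for $f$. This confirms consistency of the two definitions of $f$.

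Finally, taking square roots and substituting into the formula from Corollary \ref{c1} yields
\[
\lambda_H-\lambda_L
=2\min_{\chi\in\hat{A}}|\chi(\vec{v}_1)+\chi(\vec{v}_2)+\chi(\vec{v}_1)\chi(\vec{v}_2)|
=\min_{(\theta_1,\theta_2)\in N^\perp}2\sqrt{f(\theta_1,\theta_2)}.
\]
There is no real obstacle here; the entire content is the bookkeeping of passing from the character $\chi$ to its coordinates in $N^\perp$ and recognizing the modulus squared as the trigonometric polynomial $f$.
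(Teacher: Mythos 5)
Your proposal is correct and follows essentially the same route as the paper, which presents Corollary \ref{c2} as an immediate reformulation of Corollary \ref{c1} via the identification of $\hat{A}$ with $N^\perp$ and the definition of $f$, leaving the computation implicit. Your verification that $|e^{i\theta_1}+e^{i\theta_2}+e^{i(\theta_1+\theta_2)}|^2 = 3+2\cos\theta_1+2\cos\theta_2+2\cos\theta_1\cos\theta_2+2\sin\theta_1\sin\theta_2$ simply makes explicit the routine bookkeeping the paper omits.
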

This function $f$ satisfies the following proposition used in the next section.
\begin{proposition}\label{lemma1}
For $-\pi \leq \theta \leq \pi$, $f(\theta,x)\geq f(\theta,\frac{\theta}{2}+\pi)$ for all $x$.
\end{proposition}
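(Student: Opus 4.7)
The plan is to rewrite $f(\theta,x)$ in a form that isolates the dependence on $x$ inside a single cosine, after which the minimization over $x$ becomes immediate. Starting from the identity $f(\theta,x)=|e^{i\theta}+e^{ix}+e^{i(\theta+x)}|^2$, I would factor
\begin{equation*}
e^{i\theta}+e^{ix}+e^{i(\theta+x)} = e^{i\theta}+e^{ix}(1+e^{i\theta}) = e^{i\theta/2}\bigl[e^{i\theta/2}+2\cos(\theta/2)\,e^{ix}\bigr],
\end{equation*}
using the elementary identity $1+e^{i\theta}=2\cos(\theta/2)\,e^{i\theta/2}$.

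Taking the modulus squared of the bracket (the outer factor $e^{i\theta/2}$ being unimodular) and expanding via $|A+B|^2=|A|^2+|B|^2+2\mathrm{Re}(A\bar B)$ with $A=e^{i\theta/2}$ and $B=2\cos(\theta/2)\,e^{ix}$, I expect to obtain
\begin{equation*}
f(\theta,x)=1+4\cos^2(\theta/2)+4\cos(\theta/2)\cos(x-\theta/2).
\end{equation*}
At this point the proposition is essentially already proved: under the hypothesis $-\pi\le\theta\le\pi$ one has $\cos(\theta/2)\ge 0$, so the right-hand side is minimized in $x$ precisely when $\cos(x-\theta/2)=-1$, that is when $x=\theta/2+\pi$, giving the claimed inequality.

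The only genuinely subtle point is the degenerate boundary case $\theta=\pm\pi$, where $\cos(\theta/2)=0$ and the $x$-dependence disappears; in that case $f(\theta,x)$ is constant in $x$, so the inequality holds with equality for every $x$. I do not anticipate any real obstacle beyond spotting the right factorization; once the expression collapses to a single cosine in $x$, everything else is routine, and as a byproduct one can read off the minimum value $f(\theta,\theta/2+\pi)=(1-2\cos(\theta/2))^2$, which will be useful in the HOMO--LUMO gap estimate of the next section.
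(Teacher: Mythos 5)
Your proposal is correct and is essentially the paper's own argument in complex-exponential dress: your factorization $1+e^{i\theta}=2\cos(\theta/2)\,e^{i\theta/2}$ is exactly the paper's rewriting of the vector $(1+\cos\theta,\sin\theta)$ as $2\cos(\theta/2)\,(\cos(\theta/2),\sin(\theta/2))$, and both proofs then minimize the resulting term $4\cos(\theta/2)\cos(x-\theta/2)$ using $\cos(\theta/2)\geq 0$ on $[-\pi,\pi]$, with the same equality point $x=\frac{\theta}{2}+\pi$. Your closing formula $f(\theta,\frac{\theta}{2}+\pi)=\bigl(1-2\cos(\theta/2)\bigr)^2$ agrees with the paper's $g(\theta)=3+2\cos\theta-4\cos(\theta/2)$, so nothing is missing.
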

\begin{proof}
Observe that when $-\pi \leq \theta \leq \pi$, $\cos\frac{\theta}{2}$ is always nonnegative and then
\begin{eqnarray*}
f(\theta,x)&=& 3+2\cos \theta +2\cos x(1+\cos \theta)+2\sin \theta\sin x \\
&=& 3+2\cos \theta+ 2(1+\cos\theta,\sin\theta)\cdot (\cos x,\sin x) \\
&=& 3+2\cos \theta+ 4\cos\frac{\theta}{2}(\cos\frac{\theta}{2},\sin\frac{\theta}{2})\cdot (\cos x,\sin x)\\
&\geq& 3+2\cos \theta- 4\cos\frac{\theta}{2},
\end{eqnarray*}
and the equality holds when $$(\cos x, \sin x) =(-\cos\frac{\theta}{2},-\sin\frac{\theta}{2})=\left(\cos(\pi+\frac{\theta}{2}),\sin(\pi+\frac{\theta}{2})\right).$$
\end{proof}

\section{Families of Toroidal Fullerenes With Given HOMO-LUMO Gap}
In this section, we will show that for every natural number $p$ not divisible by 3, there is an infinite family $\{X_{p,q}\}_q$ with the same HOMO-LUMO gap of size $\frac{2\pi}{\sqrt{3}p}+O(p^{-2})$.\par
Let $N_{p,q}$ be the sublattice of $\Z^2$ generated by $(p,0)$ and $(-q,2q)$ and $ X_{p,q}=\Cayley(\Z^2/N_{p,q} \rtimes \Z/2\Z ,\{a,b,c\})$. Observe that $N^\perp$ is generated by $\left\{(\frac{2\pi}{p},\frac{\pi}{p}), (0,\frac{\pi}{q})\right\}$. By Proposition \ref{lemma1}, we have
\begin{eqnarray*}
\lambda_H(X_{p,q})-\lambda_L(X_{p,q})
&=&  \min_{(\theta_1,\theta_2)\in N^\perp} 2\sqrt{f(\theta_1,\theta_2)}\\
&=&  \min_{u,v} 2\sqrt{f\bigg(\frac{2\pi u}{p},\frac{\pi u}{p}+\frac{\pi v}{q}\bigg)}\\
&=&  \min_{u} 2\sqrt{f\bigg(\frac{2\pi u}{p},\frac{\pi u}{p}+\pi\bigg)}
\end{eqnarray*}
For convenience, define $g(\theta)=f(\theta,\frac{\theta}{2}+\pi)=3+2\cos \theta- 4\cos\frac{\theta}{2}$ for $-\pi\leq \theta \leq \pi$.
Since $g(-\theta)=g(\theta)$ , we have
$$ \lambda_H(X_{p,q})-\lambda_L(X_{p,q})=\min_{u} 2\sqrt{g\bigg(\frac{2\pi u}{p}\bigg)}
=\min_{0\leq \frac{2\pi u}{p} \leq \pi} 2\sqrt{g\bigg(\frac{2\pi u}{p}\bigg)} $$
It is easy to check that $g(\theta)$ decreases in $[0,\frac{2 \pi}{3}]$ and $g(\theta)$ increases in  $[\frac{2\pi}{3},\pi]$ , so the minimum of $g(\frac{2\pi u}{p})$ occurs when $u=\lfloor\frac{p}{3}\rfloor$ or $\lceil\frac{p}{3}\rceil$, where
$\lfloor x \rfloor$ is the floor function and $\lceil x \rceil$ is the ceiling function.
On the other hand, the Taylor series expansion of $2\sqrt{g(\theta)}$ at $\theta=\frac{2\pi}{3}$ is given by
$$ 2\sqrt{g(\theta)} = \sqrt{3}\left|\theta-\frac{2\pi}{3}\right| + O\left( \left|\theta-\frac{2\pi}{3}\right|^2\right). $$
If $3\nmid p$, then $\min\left\{\left|\frac{2\pi}{p}\lfloor\frac{p}{3}\rfloor-\frac{2\pi}{3}\right|,
\left|\frac{2\pi}{p}\lceil\frac{p}{3}\rceil-\frac{2\pi}{3}\right|\right\}=\frac{2\pi}{3p}$. We conclude that
\begin{theorem}
If $3\nmid p$, then $\lambda_L(X_{p,q})-\lambda_H(X_{p,q})=2 \min\left\{ \sqrt{g\left(\frac{2\pi}{p}\lfloor\frac{p}{3}\rfloor\right)}, \sqrt{g\left(\frac{2\pi}{p}\lceil\frac{p}{3}\rceil\right)}\right\}= \frac{2\pi}{\sqrt{3}p}+O(p^{-2}),$ which is independent of $q$.
\end{theorem}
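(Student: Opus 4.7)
The plan is to assemble the three ingredients laid out in the paragraphs immediately preceding the statement. From the derivation just above, the gap satisfies
$$\lambda_H(X_{p,q})-\lambda_L(X_{p,q})=\min_{0\leq \frac{2\pi u}{p} \leq \pi} 2\sqrt{g\bigl(\tfrac{2\pi u}{p}\bigr)},$$
and $g$ attains its unique minimum on $[0,\pi]$ at $\theta=2\pi/3$, being strictly decreasing on $[0,2\pi/3]$ and strictly increasing on $[2\pi/3,\pi]$ (as one checks directly from $g'(\theta)=2\sin(\theta/2)(1-2\cos(\theta/2))$). Hence the optimal integer $u$ is whichever of $\lfloor p/3\rfloor$ or $\lceil p/3\rceil$ makes $2\pi u/p$ closer to $2\pi/3$, which gives exactly the first equality in the theorem.

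For the asymptotic, I would invoke the stated Taylor expansion $2\sqrt{g(\theta)}=\sqrt{3}\,|\theta-2\pi/3|+O(|\theta-2\pi/3|^2)$. The leading coefficient $\sqrt{3}$ is pinned down by $g(2\pi/3)=0$, $g'(2\pi/3)=0$, and $g''(2\pi/3)=3/2$, so that $g(\theta)\sim\tfrac{3}{4}(\theta-2\pi/3)^2$ near the critical point and the square root produces the linear behavior with slope $\sqrt{3}$.

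For the arithmetic step: if $3\nmid p$, write $p=3k\pm 1$, so the fractional part of $p/3$ equals $1/3$ or $2/3$; in either case the distance from $p/3$ to the nearest integer is exactly $1/3$, whence
$$\min\Bigl\{\bigl|\tfrac{2\pi}{p}\lfloor \tfrac{p}{3}\rfloor-\tfrac{2\pi}{3}\bigr|,\bigl|\tfrac{2\pi}{p}\lceil \tfrac{p}{3}\rceil-\tfrac{2\pi}{3}\bigr|\Bigr\}=\tfrac{2\pi}{3p}.$$
Substituting this into the Taylor expansion produces $\sqrt{3}\cdot\tfrac{2\pi}{3p}+O(p^{-2})=\tfrac{2\pi}{\sqrt{3}\,p}+O(p^{-2})$, as claimed. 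Independence of $q$ is automatic: once Proposition \ref{lemma1} has been used to reduce the two-variable minimization over $N^\perp$ to the one-variable problem at $\pi v/q=\pi$ (realized by taking $v=q$), the parameter $q$ drops out of the problem entirely.

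There is no deep conceptual obstacle, since almost all of the groundwork has been done in the preceding paragraphs; the proof is essentially a bookkeeping synthesis. The step requiring most care is the Taylor computation at $\theta=2\pi/3$ (to extract the coefficient $\sqrt{3}$) together with the observation that both candidate sampling points sit within $O(1/p)$ of $2\pi/3$, so the quadratic remainder in the expansion contributes only $O(p^{-2})$ uniformly, independently of which branch achieves the minimum and independently of $q$.
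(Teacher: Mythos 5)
Your proposal is correct and follows essentially the same route as the paper: the reduction via Proposition \ref{lemma1} to the one-variable minimization of $g(\theta)=3+2\cos\theta-4\cos\frac{\theta}{2}$, the monotonicity of $g$ on $[0,\frac{2\pi}{3}]$ and $[\frac{2\pi}{3},\pi]$ locating the minimizer at $u=\lfloor\frac{p}{3}\rfloor$ or $\lceil\frac{p}{3}\rceil$, the Taylor expansion $2\sqrt{g(\theta)}=\sqrt{3}\left|\theta-\frac{2\pi}{3}\right|+O\left(\left|\theta-\frac{2\pi}{3}\right|^2\right)$, and the observation that $3\nmid p$ forces the sampling distance $\frac{2\pi}{3p}$. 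You in fact supply details the paper leaves implicit (the factorization $g'(\theta)=2\sin\frac{\theta}{2}\left(1-2\cos\frac{\theta}{2}\right)$ and the values $g(\frac{2\pi}{3})=g'(\frac{2\pi}{3})=0$, $g''(\frac{2\pi}{3})=\frac{3}{2}$ pinning down the coefficient $\sqrt{3}$), so no gap remains.
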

\section{Imbedding Graph Into Three Dimensional Space}
For the purpose of real world applications, it is important to design fullerenes which have a good imbedding in three dimensional space with small potential energy. The first constraint of such imbedding is that the length of each edge should be almost equal. This constraint will be satisfied if the imbedding of fullerene graphs is isometric.
Note that for the lattices $N'_{p,q}=N_{p,q}(\vec{e}_0)$ on the Euclidean plane $\R^2$ has a fundamental domain $\Sigma$ spanned by the orthogonal basis $p\vec{v}_1$ and $-q\vec{v}_1+2q \vec{v}_2$. Choose the length of $\vec{v}_1$ as the unit of length. The width of $\Sigma$ equals $p$; its length equals $2 \sqrt{3}q$.
The fullerene graphs $X_{p,q}$ is obtained from the hexagon tiling $Y$ in $\R^2$ quotient by the lattices $N'_{p,q}$, so it suffices to find a map from $\R^2 /N'_{p,q}$ to three dimensional space, which induces an imbedding of $X_{p,q}$ to three dimensional space. Consider the standard map $F(u,v)=(x(u,v),y(u,v),z(u,v))$ from $\Sigma$ to a torus, where\\
$\left\{\begin{array}{rcl}
x(u,v)&=&\left(R+r\cos(\frac{2 \pi u}{p})\right)\cos(\frac{\pi v}{ \sqrt{3}q}),\\
y(u,v)&=&\left(R+r\cos(\frac{2 \pi u}{p})\right)\sin(\frac{\pi v}{\sqrt{3}q}),\\
z(u,v)&=&r\sin(\frac{2 \pi u}{p}),
\end{array}\right.$ \\
and $R$ and $r$ are some constants.
In order that $F$ is an isometry, it should satisfy $\langle F_u,F_v\rangle=0$, $\langle F_u,F_u\rangle = \frac{4\pi^2 r^2}{p^2} =1$ and $\langle F_v,F_v\rangle=\frac{\pi^2 R^2}{3q^2}\left|1+\frac{r}{R}\cos(\frac{2 \pi u}{p})\right|^2=1$, which is impossible. However, we can simply let $r=\frac{p}{2 \pi}$ and $R=\frac{\sqrt{3}q}{ \pi}$, such that $\langle F_u,F_u\rangle=1$ and $\langle F_v,F_v\rangle=\left|1+\frac{p}{2\sqrt{3}q}\cos(\frac{2 \pi u}{p})\right|^2$. Then $F$ is close to an isometry when $\frac{p}{q}$ is small. Thus with $p$ fixed and $q$ increasing , we can increase the stability of the molecule without changing the HOMO-LUMO gap. Note that when $\frac{p}{q}\to 0$,  the toroidal fullerene becomes a carbon nanotube.\\
\begin{minipage}{0.5\textwidth}
\begin{center}
\includegraphics[width=\textwidth]{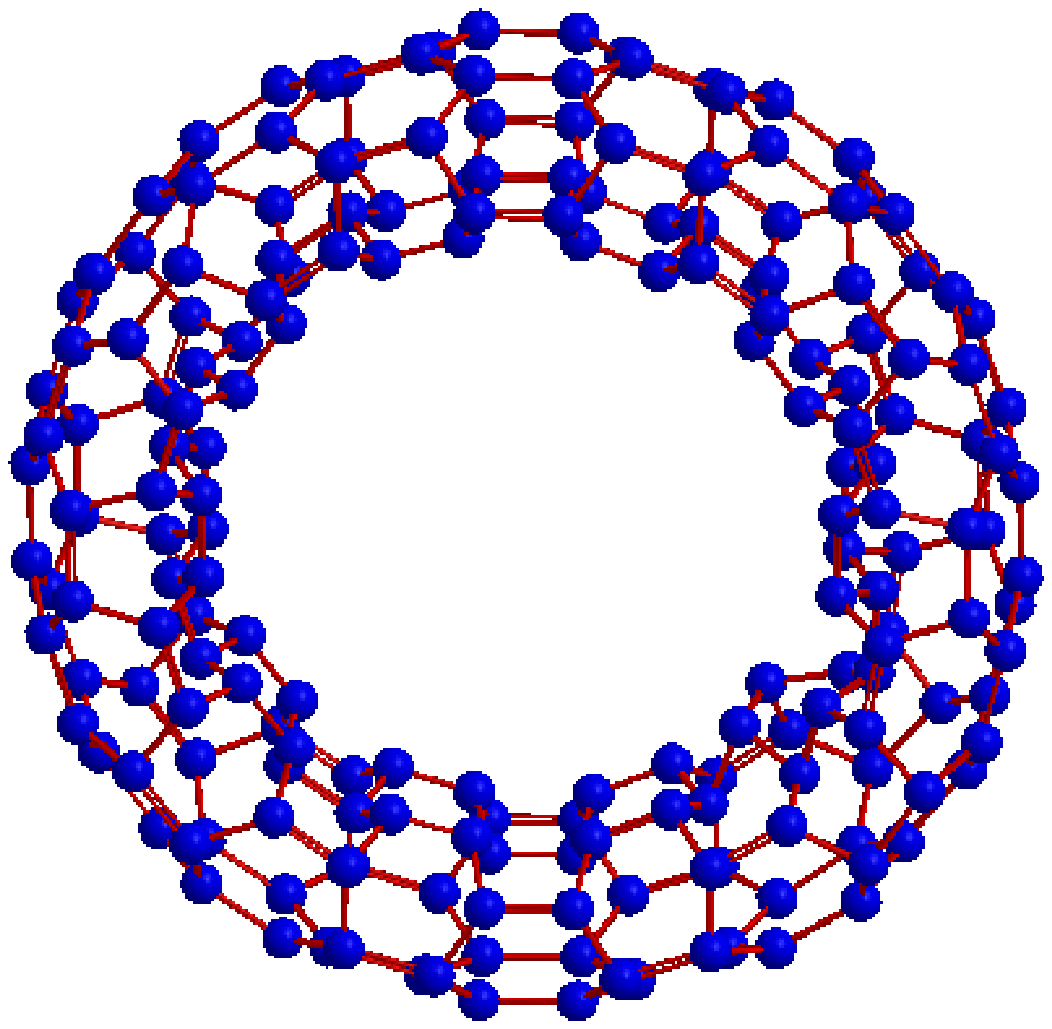}\\
Figure 8: $p=5,q=10$
\end{center}
\end{minipage}
\begin{minipage}{0.5\textwidth}
\begin{center}
\includegraphics[width=\textwidth]{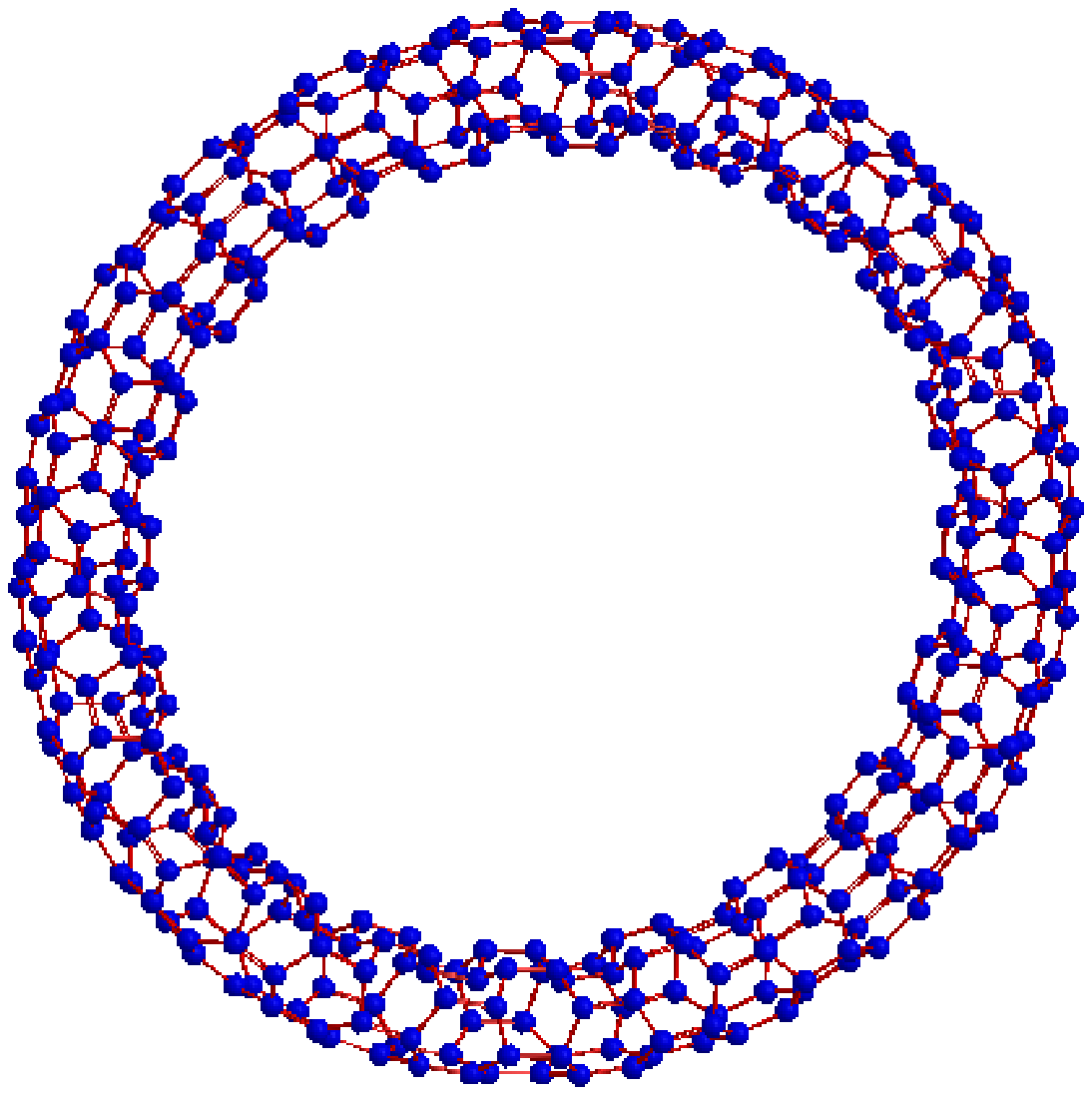}\\
Figure 9: $p=5,q=20$
\end{center}
\end{minipage}
\section{Acknowledgement}
The author gratefully acknowledges valuable discussions with Professor Wen-Ching Winnie Li.

\end{document}